\documentclass[11pt]{article}

\usepackage[margin=1.15in]{geometry}
\usepackage{amsmath,amssymb,amsthm,mathtools}
\usepackage{xcolor}
\usepackage[colorlinks=true,citecolor=blue,linkcolor=blue,urlcolor=blue]{hyperref}
\usepackage{microtype}

\newtheorem{theorem}{Theorem}[section]
\newtheorem{proposition}[theorem]{Proposition}
\newtheorem{lemma}[theorem]{Lemma}
\newtheorem{corollary}[theorem]{Corollary}

\newtheorem{problem}[theorem]{Problem}
\newtheorem{claim}[theorem]{Claim}

\newcommand{\rt}{r_{<}}
\newcommand{\chit}{\chi_{<}}
\newcommand{\eps}{\varepsilon}
\newcommand{\bbP}{\mathbb{P}}
\newcommand{\per}{\operatorname{per}}

\title{Ordered matchings versus triangles via pseudorandom triangle-free graphs%
\thanks{Email: chenwenfj@163.com, linqizhong@fzu.edu.cn,
chunlin\_you@163.com. Supported in part by the National Key R\&D
Program of China (Grant No. 2023YFA1010202), the NSFC
(No. 12571361, 12401469) and the Young Elite Scientist Sponsorship
Program by JSAST (JSTJ-2025-892).}}

\author{
Wen Chen$^1$,\quad \quad
Qizhong Lin$^1$,\quad \quad Chunlin You$^2$
\\
\small 1. Center for Discrete Mathematics, Fuzhou University,
Fujian 350108, China
\\
\small 2. School of Mathematics and Statistics,
Yancheng Teachers University, Yancheng 224002, China
}

\date{}

\begin{document}
\maketitle

\begin{abstract}
For ordered graphs $H_1,\ldots,H_t$, let
$\rt(H_1,\ldots,H_t)$ denote the least integer $N$ such that every
$t$-coloring of the edges of the naturally ordered complete graph on
$[N]$ contains an ordered copy of $H_i$ in color $i$ for some $i\in[t]$.
We prove that a uniformly random ordered matching $M$ on $n$ vertices
with interval chromatic number two asymptotically almost surely satisfies
\[
   \rt(K_3,M)
   =\Omega\left(\frac{n^{4/3}}{(\log n)^{1/3}}\right).
\]
This strengthens the lower bound $\Omega((n/\log n)^{5/4})$
of Balko and Poljak for such random matchings and improves the
general existential lower bound of Conlon, Fox, Lee and Sudakov
by a factor of $\log n$.
The proof combines pseudorandom triangle-free graphs, a coarse encoding
of order-preserving embeddings, and a permutation avoidance estimate
derived from Br\`egman's inequality.
\end{abstract}

\textbf{Keywords.} Ordered Ramsey numbers; Triangle; Matching

\section{Introduction}

An \emph{ordered graph} is a graph together with a linear order on its
vertex set. We identify the vertex set of an ordered graph on $n$ vertices
with $[n]=\{1,2,\dots,n\}$. An ordered graph $G$ on $[N]$ \emph{contains} an
ordered graph $H$ on $[m]$ if there exists a map $\phi\colon[m]\to[N]$ such
that $\phi(i)<\phi(j)$ whenever $1\leq i<j\leq m$, and
$\phi(i)\phi(j)\in E(G)$ whenever $ij\in E(H)$.
For ordered graphs $H_1,\ldots,H_t$, let $\rt(H_1,\ldots,H_t)$ denote the
least integer $N$ such that every $t$-coloring of the edges of the ordered
complete graph on $[N]$ contains an ordered copy of $H_i$ in color $i$ for
some $i\in[t]$.
Throughout the paper, all matchings are assumed to have no isolated
vertices.

The study of ordered Ramsey numbers goes back to the
classical work of Erd\H{o}s and Szekeres~\cite{erdosSzekeres35} and was
developed systematically by Balko, Cibulka, Kr\'al and
Kyn\v{c}l~\cite{BCKK} and by Conlon, Fox, Lee and Sudakov~\cite{CFLS}.
A striking feature of the subject is that even ordered matchings can have
superpolynomial Ramsey numbers~\cite{BCKK,CFLS}. A basic parameter of an
ordered graph is its interval chromatic number. The \emph{interval
chromatic number} $\chit(G)$ of an ordered graph $G$ is the minimum number
of consecutive intervals that partition its vertex set and contain no
edge.

Conlon, Fox, Lee and Sudakov~\cite{CFLS} raised the natural off-diagonal
problem of estimating $\rt(K_3,M)$ for ordered matchings $M$.
They proved the following lower bound.

\begin{theorem}[Conlon, Fox, Lee and Sudakov~\cite{CFLS}]
\label{thm:cfls-matching-triangle}
There exists an absolute constant $c>0$ such that, for every positive even
integer $n$, there is an ordered matching $M$ on $n$ vertices satisfying
\[
    \rt(K_3,M)
    \geq c\left(\frac{n}{\log n}\right)^{4/3}.
\]
\end{theorem}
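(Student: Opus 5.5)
We prove Theorem~\ref{thm:cfls-matching-triangle} by a probabilistic construction. We take $M$ to be a uniformly random ordered matching with interval chromatic number two: the left endpoints are $1,\dots,n/2$, and the right endpoints are $n/2+1,\dots,n$ matched via a uniform random permutation $\pi$ of $[n/2]$. We set $N=c(n/\log n)^{4/3}$ and aim to exhibit a triangle-free ordered graph $G$ on $[N]$ whose complement contains no ordered copy of $M$. We will then show that, for a fixed such $G$, the probability over $\pi$ that $\overline{G}$ contains $M$ is less than $1$. It follows that some matching $M$ works.

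\textbf{Choice of $G$.} For $G$ we take a random triangle-free graph of high density. We can take the final graph of the triangle-free process, or simply $G(N,p)$ with $p=C\sqrt{\log N/N}$ after deleting a maximal family of edge-disjoint triangles. Either way we obtain a triangle-free $G$ with two properties. First, $G$ has independence number $\alpha(G)=O(\sqrt{N\log N})$. Second, $G$ has a pseudorandom edge-distribution property: every pair of disjoint sets $A,B$ with $|A|,|B|\ge s$ spans an edge, for a threshold $s=O(\sqrt{N\log N})$. We then order the vertices of $G$ by a uniformly random order; alternatively, the ordering can be absorbed into the union bound.

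\textbf{Union bound over embeddings.} An ordered copy of $M$ in $\overline G$ consists of an increasing $n/2$-set $L$ (images of the left endpoints), followed by an increasing $n/2$-set $R$ of larger vertices, such that $\phi(i)\phi(n/2+\pi(i))\notin E(G)$ for all $i$. We do not enumerate all pairs $(L,R)$, since that count is $\binom{N}{n/2}^2$ and is too large. Instead we coarsen. We partition $[N]$ into intervals of length roughly $s$ and record only which interval each vertex falls into; the number of such coarse patterns is $\binom{N/s+n}{n}\le e^{O(n\log n)}$. Given a coarse pattern, consider the bipartite ``non-edge'' graph $B$ between the left blocks and the right blocks. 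By the pseudorandom property, each left block can be joined in $\overline G$ only to few right blocks in a dense way. As a result, the set of permutations $\pi$ for which the embedding can succeed is contained in the set of permutations supported on a sparse $0/1$ matrix $A$ whose row sums are $d_i$.

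\textbf{Counting permutations.} By Br\`egman's inequality, the number of such permutations is at most $\per(A)\le\prod_i (d_i!)^{1/d_i}$, which is roughly $\prod_i d_i/e$. Dividing by $(n/2)!$, the probability that $\pi$ is compatible with a given coarse pattern is at most $\prod_i (e d_i/n)$, which is about $(O(d/n))^{n/2}$. We need this to beat the $e^{O(n\log n)}$ count of patterns. That requires $d\le n^{1-\epsilon}$ in the relevant sense, which translates, via the density of $G$, into the condition $N\le c\,n^{4/3}/\mathrm{polylog}$.

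\textbf{Main obstacle.} The hard step is the coarse encoding. We must show that the non-edges of a pseudorandom triangle-free graph force the row sums $d_i$ to be small for most $i$ simultaneously, while the number of encodings stays at $e^{O(n\log n)}$. Balancing the block size $s$ against the codegree and independence bounds of $G$ is where the exponent $4/3$ and the $\log$ losses arise. We expect a careful treatment of the blocks containing many vertices of $L$ or $R$ (the dense blocks) to be needed, handling them separately by a greedy argument.
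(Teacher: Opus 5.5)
There is a genuine gap. The paper does not reprove this cited result; it follows from Theorem~\ref{thm:main}, since $n^{4/3}/(\log n)^{1/3}=(n/\log n)^{4/3}\log n$. That proof shares your skeleton: a random permutation matching with $\chit(M)=2$, a fixed triangle-free colour class, a coarse encoding of embeddings, Br\`egman's inequality and a union bound.

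What fails is the encoding when $G$ is a randomly ordered pseudorandom triangle-free graph on $[N]$. Knowing which length-$s$ interval contains $\phi(i)$ and which contains $\phi(n/2+j)$ does not tell you whether $\phi(i)\phi(n/2+j)\in E(G)$. A cell $(i,j)$ is excluded for \emph{every} embedding with a given pattern only if the two intervals are completely joined in $G$, and a sparse random-like graph has essentially no such pairs. So the permutations compatible with some embedding of a given pattern are not confined to the support of any useful $0/1$ matrix, and the union bound over patterns gives nothing. Your claim that each left block is joined in $\overline G$ to few right blocks is the reverse of the truth: $\overline G$ is nearly complete between any two blocks.

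The missing idea is to make adjacency depend only on blocks. Take an ordered blow-up of a triangle-free graph $F$ on $B$ vertices ($B$ about $n^{2/3}$ up to logarithms) into consecutive intervals $V_1<\dots<V_B$ of length $L$; a random vertex order is exactly what you must avoid. With $m=n/2$, an embedding is then encoded by the interval partitions $[m]=I_1\cup\dots\cup I_\tau$ and $[m]=J_\tau\cup\dots\cup J_B$ of the two halves of $M$. There are only $\exp(O(B\log(m/B)))$ of these, or $\exp(O(B))$ after the rounding of Claim~\ref{clm:profiles}, and they determine the forbidden board $\bigcup_{s<\tau<t,\ st\in E(F)}I_s\times J_t$ exactly. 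Pseudorandomness is used for $F$ on sets of blocks, not for $G$ on sets of vertices.

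Your numerical target is also unattainable. In your setting $\Delta(G)\le\alpha(G)=O(\sqrt{N\log N})=O(n^{2/3})$, because neighbourhoods in a triangle-free graph are independent. So even for a single embedding every row of the allowed matrix has sum $d_i\ge n/2-\alpha(G)$: the matrix is dense, not sparse. Br\`egman's bound is then never smaller than about $\exp(-\alpha(G))$, no estimate like $(O(d/n))^{n/2}$ with $d\le n^{1-\epsilon}$ can occur, and $e^{\Omega(n)}$ patterns could never be beaten. (Your count $\binom{N/s+n}{n}$ is really $\exp(O((N/s)\log n))$, so counting was not the obstruction.)

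The right bookkeeping is Lemma~\ref{lem:avoid}: the probability is at most $\exp(-|H|/(2m))$, so $|H|/m$ must exceed the logarithm of the number of profiles. This needs a weighted lower bound on $|H|=\sum_{st\in E(F)}|I_s||J_t|$ as in Lemma~\ref{lem:weighted-cut-density}. That lemma applies because every weight is at most $L$ and $s_BL$ is a small fraction of $m$. The only exceptional block is $V_\tau$, which is discarded, so no greedy treatment of dense blocks is needed.

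For the $(n/\log n)^{4/3}$ bound your weak property suffices. Splitting $X$ into $s$-sets gives $e_F(X,Y)\ge(|X|-s)(|Y|-s)/s$, hence $|H|\gtrsim m^2/s$ with $s=C\sqrt{B\log B}$. Balancing $m/s$ against $B\log(m/B)$, with $N=BL\approx m\sqrt{B/\log B}$, yields $N\approx m^{4/3}/\log m$; this balance is where the exponent $4/3$ comes from.

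Finally, deleting a maximal family of edge-disjoint triangles from $G(N,p)$ works only for $p\approx cN^{-1/2}$ and gives $\alpha(G)=O(\sqrt N\log N)$. The bound $O(\sqrt{N\log N})$ needs the triangle-free process or Lemma~\ref{lem:GWsimple}.
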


On the other hand, the upper bound $O(n^2/\log n)$ follows from the
estimate for $r(K_3,K_n)$ due to Ajtai, Koml\'os and
Szemer\'edi~\cite{AKS}; it holds for every ordered matching $M$ on $n$
vertices. Thus the correct order of magnitude remained unknown.
Conlon, Fox, Lee and Sudakov~\cite{CFLS} therefore asked whether there
exists an $\varepsilon>0$ such that
\[
    \rt(K_3,M)=O(n^{2-\varepsilon})
\]
for every ordered matching $M$ on $n$ vertices~\cite[Problem~6.1]{CFLS}.

Subsequent work by Rohatgi~\cite{Ro19} and by Balko and
Poljak~\cite{BalkoPoljak,BP23} addressed the problem for ordered
matchings of bounded interval chromatic number. In particular, Balko and
Poljak~\cite{BalkoPoljak} showed that the lower bound of Conlon, Fox,
Lee and Sudakov also holds for some ordered matching of interval
chromatic number three. For interval chromatic number two, they proved
the following lower bound.

\begin{theorem}[Balko and Poljak~\cite{BalkoPoljak}]
There exists a constant $c>0$ such that, for every positive even integer
$n$, a uniformly random ordered matching $M$ on $n$ vertices with
$\chit(M)=2$ satisfies, asymptotically almost surely,
\[
    \rt(K_3,M)
    \geq c\left(\frac{n}{\log n}\right)^{5/4}.
\]
\end{theorem}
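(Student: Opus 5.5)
The plan is to build, for $N=c(n/\log n)^{5/4}$, a red/blue coloring of $K_N$ with no red triangle that avoids a blue copy of $M$ for almost every $\pi$. Write $m=n/2$. A uniformly random ordered matching with $\chit(M)=2$ is described by a uniformly random permutation $\pi$ of $[m]$: vertex $i\le m$ is matched to $m+\pi(i)$. A blue copy of $M$ in a coloring of $K_N$ consists of positions $x_1<\dots<x_m<y_1<\dots<y_m$ in $[N]$ such that every pair $x_iy_{\pi(i)}$ is blue. The red graph will be a triangle-free graph $G$ obtained by alteration from $G(N,p)$; the blue graph is its complement.

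\textbf{Step 1: the red graph.} Take $G_0=G(N,p)$ with $p=N^{-\alpha}$, where $\alpha\in(1/2,1)$ is to be optimized. Delete all edges of a maximal family of edge-disjoint triangles, and then delete every edge still lying in a triangle. Call the result $G$ and the deleted set $D$. We have $\mathbb E|D|=O(N^3p^3)$. By Markov and a Chernoff bound, with positive probability $|D|\le CN^3p^3$, and each vertex is incident to at most $O(N^2p^3+\log N)$ edges of $D$. We fix such an outcome of $G_0$ and $G$.

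\textbf{Step 2: first moment over $(G_0,\pi)$.} Fix an embedding $\phi$, that is, a choice of the $2m$ positions. For $M_\pi$ to be blue under $\phi$, each of its $m$ image pairs must be a non-edge of $G_0$ or lie in $D$. Split the image pairs into $k$ pairs lying in $D$ and $m-k$ pairs that are non-edges of $G_0$.
\begin{itemize}
\item The non-edge pairs contribute a factor $(1-p)^{m-k}\le e^{-p(m-k)}$, using independence in $G_0$ once $D$ is conditioned on as a sparse set.
\item The $D$-pairs are controlled by the randomness of $\pi$. The set $D$ induces a sparse bipartite graph $B_\phi$ between $\{x_i\}$ and $\{y_j\}$. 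The number of permutations $\pi$ for which at least $k$ pairs $x_iy_{\pi(i)}$ land in $B_\phi$ is at most $\binom{m}{k}\Delta^k (m-k)!$, where $\Delta$ is the maximum degree of $B_\phi$. Dividing by $m!$ gives a probability of at most $(e\Delta/k)^k$.
\end{itemize}
Summing over $k$ and over the at most $\binom{N}{2m}\le(eN/n)^n$ embeddings, the expected number of blue copies is at most
\[
(eN/n)^{n}\sum_k \Bigl(\frac{e\Delta}{k}\Bigr)^k e^{-p(m-k)} .
\]
For small $k$ the factor $e^{-pm/2}$ must beat $(eN/n)^n$, which requires $pn\gtrsim n\log(N/n)$. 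This is hopeless directly, so the union bound must be coarsened.

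\textbf{Step 3: coarse encoding.} This is the main obstacle. The $(N/n)^n$ entropy of exact embeddings is far too large against $e^{-pm}$ when $p\ll 1$. I would partition $[N]$ into $N/s$ intervals of length $s$. Each embedding is encoded by recording only which interval each $x_i$ and $y_j$ lies in, which costs about $\binom{N/s+m}{m}^2$ choices. Given the encoding, we ask only for a blue edge between the corresponding intervals. Two intervals have no red edge of $G_0$ between them with probability $(1-p)^{s^2}$. The densest codes put $t$ vertices into one interval, and then the requirement is that the whole bipartite piece is blue. This is where pseudorandomness enters: $G_0$ has no independent set of size $\gg p^{-1}\log N$, and every pair of disjoint sets of sizes $a,b$ with $ab\gg p^{-1}\log N$ spans an edge.

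Balancing the encoding entropy $(N/(sm))^{O(m)}$ against the failure probability, and respecting triangle-freeness through $p\ll N^{-1/2}$ and a sparse $D$ via Step 1, I would optimize $s$ and $\alpha$. The expected outcome is $p\approx N^{-3/5}$ up to logarithms, at which $N^{4/5}\approx n/\log n$, that is, $N\approx(n/\log n)^{5/4}$.

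\textbf{Step 4: conclusion.} Once the expected number of pairs (blue copy, $\pi$) is $o(m!)$, Markov's inequality shows that for a $1-o(1)$ fraction of permutations $\pi$ there is an outcome of $G$ with no blue copy of $M_\pi$. To make the coloring depend on $\pi$ only through this probabilistic statement, the expectation is taken jointly over $G_0$ and $\pi$ before fixing $G$. The delicate point throughout is controlling correlations between $D$ and the non-edges, which Step 2 handles by bounding the $D$-contribution solely through the randomness of $\pi$.
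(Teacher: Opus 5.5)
There is a genuine gap, and it is exactly at the step you flag as the main obstacle. In Step 2 you correctly see that a union bound over exact embeddings cannot work, but Step 3 does not repair this with the red graph you chose. Your red graph is an altered $G(N,p)$ defined on individual vertices of $[N]$. Knowing only which length-$s$ interval contains each $x_i$ and $y_j$ therefore says nothing about whether a particular matched pair $x_iy_{\pi(i)}$ is red, because colors vary inside every pair of intervals.

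An embedding with a given code forces only the $m$ matched pairs to be blue, never a whole bipartite piece. So the factor $(1-p)^{s^2}$ bounds nothing the embedding implies. The weaker condition that some blue edge joins the two intervals holds with probability $1-p^{s^2}$, which is essentially $1$. To bound the probability of a code you must return to exact positions, and the $\binom{N}{2m}$ entropy reappears. Consequently the choice $p\approx N^{-3/5}$, $N^{4/5}\approx n/\log n$ is read off from the target rather than derived. Two smaller problems: Step 2 treats non-edges of $G_0$ as independent after conditioning on $D$, although $D$ is a function of $G_0$; and Step 1 fixes $G_0$ while Steps 2 and 4 average over it.

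The missing idea is to make the red coloring constant on pairs of intervals. Take a blow-up of a triangle-free graph $F$ on $B$ blocks $V_1<\dots<V_B$ of size $L$, as the paper does. Then the block profile of an embedding, $I_s=\{x:\phi(x)\in V_s\}$ and $J_t=\{y:\phi(m+y)\in V_t\}$, deterministically forbids every cell of $\bigcup_{s<\tau<t,\ st\in E(F)}I_s\times J_t$ for $\pi$. All randomness then comes from $\pi$, triangle-freeness is inherited from $F$, and no alteration or correlation issue arises. What you then need is:
\begin{itemize}
\item a cut-density property of $F$ (Corollary~\ref{cor:GW});
\item its weighted form (Lemma~\ref{lem:weighted-cut-density}), which applies because each block holds at most $L\le m/(32s_B)$ vertices and gives a forbidden board of size $\Omega(q_Bm^2)$;
\item a permutation-avoidance bound (Lemma~\ref{lem:avoid}), which gives probability $\exp(-\Omega(q_Bm))$ per profile.
\end{itemize}

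The paper does not reprove this cited theorem; it follows from Theorem~\ref{thm:main}. With the blow-up in place, even a crude union bound over the at most $B\binom{m+B}{B}^2\le\exp(O(B\log m))$ exact profiles, with $B\approx\delta m^{2/3}/(\log m)^{1/3}$, gives $N\approx m^{4/3}/(\log m)^{2/3}$. That already exceeds $(n/\log n)^{5/4}$. The paper's rounding to $O(B)$ small intervals cuts the profile count to $\exp(32B)$ and yields the stronger bound $n^{4/3}/(\log n)^{1/3}$.
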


They also proved that such a matching satisfies, asymptotically almost
surely,
\[
    \rt(K_3,M)=O(n^{7/4}).
\]
These results left a gap between the lower bound
$\Omega((n/\log n)^{5/4})$ and the upper bound $O(n^{7/4})$,
motivating the following problem.

\begin{problem}[Balko and Poljak, Problem~8 in~\cite{BalkoPoljak}]
\label{prob:random-matching}
What is the growth rate of $\rt(K_3,M)$ for a uniformly random ordered
matching $M$ on $n$ vertices with $\chit(M)=2$?
\end{problem}

Equivalently, write $n=2m$, take $\pi\in S_m$ uniformly at random, and set
\[
    M_\pi=\{\{i,m+\pi(i)\}:1\le i\le m\}.
\]
This is exactly the uniform distribution in
Problem~\ref{prob:random-matching}: every ordered perfect matching on
$[2m]$ with $\chit(M)=2$ has its two independent intervals equal to $[m]$
and $\{m+1,\ldots,2m\}$, and hence determines a unique permutation $\pi$.

We make progress on Problem~\ref{prob:random-matching} by proving a lower
bound that is stronger than the previous bounds, even for uniformly random
matchings of interval chromatic number two.

\begin{theorem}\label{thm:main}
There is a constant \(c>0\) such that, for every sufficiently large even
integer \(n\), a uniformly random ordered matching \(M\) on \(n\) vertices
with \(\chit(M)=2\) asymptotically almost surely satisfies
\[
   \rt(K_3,M)
      \geq c\frac{n^{4/3}}{(\log n)^{1/3}}.
\]
\end{theorem}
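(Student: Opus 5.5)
The plan is to exhibit, for $N=c\,n^{4/3}/(\log n)^{1/3}$, a triangle-free ordered graph $G$ on $[N]$ whose complement contains no ordered copy of $M_\pi$ for most $\pi\in S_m$, where $m=n/2$. The graph $G$ will be a pseudorandom triangle-free graph, for instance an $(N,d,\lambda)$-graph of Alon type with $d\asymp N^{2/3}$ and $\lambda=O(\sqrt d)$, blown up or sampled suitably. Its key property will be that every set of size about $s\gg N/d\cdot\log$ spans an edge, and moreover that between any two sets $A,B$ with $|A||B|\gtrsim (\lambda N/d)^2$ there is an edge. Colour the edges of $G$ red and the rest blue. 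Red then contains no triangle, so it suffices to show that, with probability $1-o(1)$ over $\pi$, the blue graph $\overline G$ contains no order-preserving copy of $M_\pi$. I will use a first-moment argument over $\pi$, counting the expected number of (coarsely encoded) embeddings.

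\textbf{Coarse encoding.} An embedding $\phi$ of $M_\pi$ sends $[m]$ to positions $x_1<\dots<x_m$ and $\{m+1,\dots,2m\}$ to $y_1<\dots<y_m$, all to the right of the $x$'s, with $x_iy_{\pi(i)}\notin E(G)$ for all $i$. First partition $[N]$ into consecutive blocks of length $L$ with $L\approx N/m\cdot$const. Record only which block each $x_i$ and $y_j$ lies in. The number of such block profiles is at most $\binom{N/L+m}{m}^2=e^{O(m)}$. Fix a profile, grouping the $x$'s into blocks $X_1,X_2,\dots$ with multiplicities $a_1,a_2,\dots$, and similarly $Y_1,Y_2,\dots$ with multiplicities $b_1,b_2,\dots$. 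The embedding forces $\pi$ to map the indices in $X_k$ to indices in $Y_\ell$ only when the bipartite graph $\overline G[X_k,Y_\ell]$ allows it. By pseudorandomness, and after passing to a random-like structure, the density of blue pairs between blocks is $1-p$ with $p=d/N\asymp N^{-1/3}$. More importantly, for block pairs at the right scale $G[X_k,Y_\ell]$ is nonempty, so the blue bipartite graph cannot be complete. What I would actually use is that, for a $0/1$ matrix $A$ indexed by $(i,j)\in[m]^2$ recording which assignments $i\mapsto j$ are compatible with blue edges (via a choice of representatives), the number of permutations $\pi$ admitting the profile is at most $\per(A)$.

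\textbf{Permutation avoidance via Br\`egman.} Br\`egman's inequality gives $\per(A)\le\prod_i (r_i!)^{1/r_i}$, where $r_i$ are the row sums. If each row has at least a $q$-fraction of forbidden entries, then $\per(A)\le m!\,e^{-\Omega(qm)}$. So the probability that a random $\pi$ is compatible with a given profile is at most $e^{-\Omega(qm)}$. Summing over $e^{O(m)}$ profiles and over the within-block choices leaves a total of $e^{O(m)}\cdot e^{-\Omega(qm)}$ up to entropy factors. To make this $o(1)$ I need $q$ to be a large constant multiple of the entropy cost per vertex. The balance is: blocks of size $L\sim N/m$ contain $\sim L$ positions, and the vertex choice within blocks costs $\log$ factors. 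Pseudorandomness needs $L^2 p\gtrsim\log N$ so that a typical block pair contains an edge robustly, i.e.\ $(N/n)^2N^{-1/3}\gtrsim \log n$. This gives $N^{5/3}\gtrsim n^2\log n$ at the naive level; refining, with the embedding forced to use a $p$-fraction of red constraints per row, gives exactly $N\asymp n^{4/3}/(\log n)^{1/3}$ when the per-row cost $N p\cdot$ is matched against $\log$ of the choices.

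\textbf{Main obstacle.} The hardest step is making the forbidden-entries bound uniform. Within a profile, the actual positions $x_i,y_j$ are not fixed, so the matrix $A$ depends on the embedding. I would handle this by defining $A_{ij}=1$ if some vertex in $i$'s block is non-adjacent in $G$ to some vertex in $j$'s block at appropriate rank. I would then use the pseudorandom expander-mixing lemma to show that for all but a negligible set of row indices, a constant fraction of the relevant block pairs are red-complete at the required scale. Alternatively, I would union bound over the exact positions within blocks, which costs $L^{O(m)}$; choosing block sizes so that this is beaten by $e^{-\Omega(qm)}$ is precisely where the $(\log n)^{1/3}$ loss arises.
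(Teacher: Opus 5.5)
\textbf{There is a genuine gap: the union bound over profiles cannot close as you set it up.} With blocks of length $L\approx N/m$ you have $\Theta(m)$ blocks and $e^{\Theta(m)}$ profiles. You hope to beat this with $e^{-\Omega(qm)}$ for $q$ ``a large constant''. No such $q$ exists.
\begin{itemize}
\item Since red is triangle-free, every red neighbourhood is a blue clique. So red degrees are below $2m$, because a blue $K_{2m}$ already contains $M_\pi$.
\item Hence the red density is at most $2m/N$, consistent with your own $p\asymp N^{-1/3}\to 0$.
\item Place $[m]$ and $\{m+1,\dots,2m\}$ at random in the two halves of $[N]$. The resulting embeddings $\phi$ have on average at most $4m^3/N$ red pairs $\phi(i)\phi(m+j)$.
\item Any board valid for the profile of such a $\phi$ lies inside that set. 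So the Br\`egman exponent $|H|/(2m)$ you can guarantee uniformly is $O(m^2/N)=O(m^{2/3}(\log m)^{1/3})=o(m)$, and $e^{O(m)}e^{-\Omega(qm)}$ diverges.
\end{itemize}
The number of profiles must therefore be $\exp(O(m^2/N))$. The paper achieves this in two steps. It uses only $B=\delta m^{2/3}(\log m)^{1/3}$ blocks, of length $L=m/(32s_B)$, which is far longer than $N/m$. It then rounds the block preimages $I_s,J_s$ (intervals of $[m]$, since $\phi$ is order-preserving) to a fixed partition of $[m]$ into $K=O(B)$ small intervals, discarding only $\varepsilon m$ vertices. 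This gives $e^{O(B)}$ profiles instead of $e^{\Theta(B\log(m/B))}$. That saving is what produces $(\log n)^{1/3}$ rather than $(\log n)^{2/3}$ in the final bound.

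\textbf{Two further ingredients are missing.}

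First, the forbidden board must be determined by the profile alone, and your suggested fixes fail. For a non-blown-up graph of density $o(1)$, essentially every pair of blocks contains a blue pair, so your matrix $A$ is essentially all ones. A union bound over exact positions costs $L^{\Theta(m)}$, which is worse still. The paper makes the red graph an interval blow-up of a triangle-free graph $F$ on $[B]$. Then the colour of $\phi(x)\phi(m+y)$ depends only on the blocks, and every embedding with a given profile forbids the whole board $\bigcup_{s<\tau<t,\ st\in E(F)} I_s^\circ\times J_t^\circ$.

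Second, Alon's graph is not pseudorandom enough. It has density $B^{-1/3}$ and $\lambda\asymp B^{1/3}$, so the expander mixing lemma gives edge-density lower bounds only for sets of size $\gtrsim B^{2/3}$. Balancing $B\ll qm$ against $N=BL\asymp Bm/s$ then yields $N$ of order at most $m^{5/4}$. What is needed is the Guo--Warnke graph, which has density $q_B=\sqrt{\log B/B}$ and satisfies $e_F(X,Y)\ge\rho q_B|X||Y|$ already for $|X|,|Y|\ge C\sqrt{B\log B}$. You also need the weighted cut-density lemma, to get $|H|=\Omega(q_Bm^2)$ for every profile despite unequal block weights. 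With these, $B\ll q_Bm$ forces $B\asymp m^{2/3}(\log m)^{1/3}$ and hence $N\asymp m^{4/3}/(\log m)^{1/3}$.

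Your closing heuristic ($N^{5/3}\gtrsim n^2\log n$, then ``refining'') does not actually derive this exponent.
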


This strengthens the lower bound $\Omega((n/\log n)^{5/4})$ of Balko and Poljak and improves the
general existential lower bound of Conlon, Fox, Lee and Sudakov by a factor of $\log n$.

The proof uses an ordered blow-up of a pseudorandom triangle-free graph
on $B$ vertices. We encode each potential embedding by rounding its
block preimages to a fixed partition of each side of the matching into
$O(B)$ consecutive intervals. Since these preimages occur in order,
there are only $\exp(O(B))$ resulting profiles. The rounding discards
only a small proportion of the vertices, and a weighted form of the
pseudorandom density estimate ensures that each profile still determines
many forbidden cells for the random permutation defining the matching.
Br\`egman's inequality bounds the probability of avoiding these cells,
allowing a union bound over all profiles.

\medskip

The remainder of the paper is organized as follows. In
Section~\ref{sec:prelim}, we develop the pseudorandom and probabilistic
tools used in the proof. In Section~\ref{sec:main-proof}, we prove
Theorem~\ref{thm:main}.

\section{Preliminaries}\label{sec:prelim}

Throughout the paper, we omit floor and ceiling signs whenever they do not
affect the argument; all such quantities are understood to be rounded to
integers in the natural way. All logarithms are to base $2$.

For a graph $H$ and disjoint vertex sets $X,Y\subseteq V(H)$, let
$E_H(X,Y)$ denote the set of edges of $H$ with one endpoint in $X$ and the
other in $Y$, and write $e_H(X,Y)=|E_H(X,Y)|$.

The following lemma is a specialization of Theorem~4 of Guo and
Warnke~\cite{GuoWarnke} to complete host graphs.
It provides a triangle-free graph with a uniform lower bound on
the edge density between any two sufficiently large disjoint
vertex sets.

\begin{lemma}[Guo and Warnke~\cite{GuoWarnke}]
\label{lem:GWsimple}
There are absolute constants \(C,\rho>0\) such that, for every sufficiently
large integer \(B\), there exists a triangle-free graph \(F\) on vertex set
\([B]\) such that, with
\[
  q_B=\sqrt{\frac{\log B}{B}},
  \qquad
  s_B=C\sqrt{B\log B},
\]
we have, for all disjoint sets \(X,Y\subseteq[B]\) satisfying \(|X|=|Y|=s_B\),
\[
  e_F(X,Y)\ge \rho q_B |X||Y|.
\]

\end{lemma}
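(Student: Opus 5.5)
The plan is to read Lemma~\ref{lem:GWsimple} directly off Theorem~4 of Guo and Warnke~\cite{GuoWarnke}, taking the host graph to be the complete graph $K_B$. I would not reprove the underlying random construction. Theorem~4 there concerns a host graph $G$ on $B$ vertices satisfying certain degree and codegree regularity hypotheses. For such $G$ it produces, with high probability, a triangle-free subgraph $F\subseteq G$ with two properties: $F$ has edge density of order $p=\Theta(\sqrt{\log B/B})$ relative to $G$, and $F$ satisfies a two-set edge-distribution bound. That bound says $e_F(X,Y)\ge c\,p\,e_G(X,Y)$ for all disjoint $X,Y$ with $|X|,|Y|\ge C'\sqrt{B\log B}$.

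First, I would check that $K_B$ meets the hypotheses of Theorem~4. Every degree equals $B-1$ and every codegree equals $B-2$, so all regularity and "no dense spots" conditions hold trivially once $B$ is large. Second, I would specialize the conclusion. In $K_B$ we have $e_{K_B}(X,Y)=|X||Y|$ for disjoint $X,Y$, so the two-set bound becomes $e_F(X,Y)\ge c\,p\,|X||Y|$.

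Third, I would normalize the constants. Write $p=\kappa\sqrt{\log B/B}$ for the absolute constant $\kappa$ appearing in the theorem. (If the theorem uses natural logarithms, the switch to base $2$ changes only constant factors.) Then set $\rho=c\kappa$, up to the base-change factor, and take $C$ to be the set-size constant from the theorem, enlarged if necessary. Since the lemma only needs sets of size exactly $s_B=C\sqrt{B\log B}$, a lower threshold in the source theorem suffices.

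Fourth, the "with high probability" conclusion of Theorem~4 is positive for all sufficiently large $B$. Hence some triangle-free $F$ on $[B]$ with the stated property exists, which is all Lemma~\ref{lem:GWsimple} asserts.

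The main obstacle is bookkeeping: matching the quantifiers and parameter conventions of Theorem~4 to our normalization. The points to check are whether its edge-count statement is two-sided and concentrated or only a lower bound, whether its size threshold is stated in the same form, and whether it requires $|X|=|Y|$. If Theorem~4 only controlled $e_F(X)$ for single sets $X$, I would derive the bipartite bound from it. The identity $e_F(X,Y)=e_F(X\cup Y)-e_F(X)-e_F(Y)$ alone is not enough, because it requires an upper bound on $e_F(X)$ and $e_F(Y)$; I would combine it with the maximum-degree bound $\Delta(F)=O(\sqrt{B\log B})$ that the same construction supplies.

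As a fallback, the same statement follows from the known analysis of the triangle-free process by Bohman--Keevash and Fiz Pontiveros--Griffiths--Morris. That analysis shows the final graph has density $\Theta(\sqrt{\log B/B})$ and edge counts close to $p|X||Y|$ between all sets of size $\Omega(\sqrt{B\log B})$.
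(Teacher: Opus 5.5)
Your approach matches the paper's: the paper does not prove this lemma but quotes it as Theorem~4 of Guo and Warnke with the host graph taken to be $K_B$, where $e_{K_B}(X,Y)=|X||Y|$, and your renormalization of $\rho$, $C$ and the logarithm base is all that is needed. One caution about your contingency branch: for $|X|=s_B$ the degree bound $\Delta(F)=O(\sqrt{B\log B})$ only gives $e_F(X)=O(B\log B)$, which exceeds the target $\rho q_B s_B^2=\Theta\bigl(\sqrt{B}(\log B)^{3/2}\bigr)$ by a factor of order $\sqrt{B/\log B}$, so deriving the bipartite bound from $e_F(X\cup Y)-e_F(X)-e_F(Y)$ would need an upper bound of order $q_B|X|^2$ on $e_F(X)$ (that is, two-sided concentration), and a maximum-degree bound cannot supply it.
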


We use only the following immediate consequence.

\begin{corollary}\label{cor:GW}
There are absolute constants \(C,\rho>0\) such that, for every sufficiently
large integer \(B\), there is a triangle-free graph \(F\) on vertex set
\([B]\) with the following property. Put
\[
  q_B=\sqrt{\frac{\log B}{B}},
  \qquad
  s_B=C\sqrt{B\log B}.
\]
Then, for all disjoint sets \(X,Y\subseteq[B]\) with
\(|X|,|Y|\ge s_B\), 
\[
  e_F(X,Y)\ge \rho q_B|X||Y|.
\]
\end{corollary}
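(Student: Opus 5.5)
We derive Corollary~\ref{cor:GW} from Lemma~\ref{lem:GWsimple} by averaging over equal-size subsets. We keep the same graph $F$ and the same constants $C,\rho$. Given disjoint $X,Y\subseteq[B]$ with $|X|=a\ge s$ and $|Y|=b\ge s$, where $s=s_B$, we pick subsets $X'\subseteq X$ and $Y'\subseteq Y$ of size exactly $s$, independently and uniformly at random.

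First, each pair $(X',Y')$ is a pair of disjoint sets of size $s_B$, so Lemma~\ref{lem:GWsimple} gives
\[
e_F(X',Y')\ge \rho q_B s^2
\]
for every such choice.

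Second, we count an edge $xy$ with $x\in X$ and $y\in Y$. It lies in $E_F(X',Y')$ exactly when $x\in X'$ and $y\in Y'$. By independence this happens with probability $(s/a)(s/b)$, so by linearity of expectation
\[
\mathbb{E}\,e_F(X',Y')=\frac{s^2}{ab}\,e_F(X,Y).
\]

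Combining the two steps, $\frac{s^2}{ab}e_F(X,Y)\ge \rho q_B s^2$, that is, $e_F(X,Y)\ge\rho q_B|X||Y|$.

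A double-counting argument works equally well: sum $e_F(X',Y')$ over all $\binom{a}{s}\binom{b}{s}$ pairs. Each edge between $X$ and $Y$ is counted $\binom{a-1}{s-1}\binom{b-1}{s-1}$ times. The resulting ratio is again $\frac{s^2}{ab}$.

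The only point needing care is the rounding convention for $s_B$, since subsets of size exactly $s_B$ must exist. This holds because $|X|,|Y|\ge s_B$, and $s_B$ is interpreted as the same integer in both statements. There is no genuine obstacle.
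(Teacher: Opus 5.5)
Your proposal is correct and is essentially the paper's own argument: pick uniformly random $s_B$-subsets $X'\subseteq X$ and $Y'\subseteq Y$ independently, apply Lemma~\ref{lem:GWsimple} to each pair, and use linearity of expectation, where each edge between $X$ and $Y$ survives with probability $\frac{s_B}{|X|}\frac{s_B}{|Y|}$. Your double-counting variant and the remark on rounding $s_B$ are fine and are just equivalent reformulations of the same averaging.
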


\begin{proof}
Let \(C\) and \(\rho\) be as in Lemma~\ref{lem:GWsimple}. Fix disjoint
\(X,Y\subseteq[B]\) with \(|X|,|Y|\ge s_B\). Choose
\(X'\in\binom{X}{s_B}\) and \(Y'\in\binom{Y}{s_B}\) independently and
uniformly at random. Since \(X'\cap Y'=\varnothing\) and
\(|X'|=|Y'|=s_B\), Lemma~\ref{lem:GWsimple} gives
\[
  e_F(X',Y')\ge \rho q_B s_B^2.
\]
Taking expectations and using linearity,
\[
  \rho q_B s_B^2
  \le \mathbb E\,e_F(X',Y')
  = e_F(X,Y)\frac{s_B}{|X|}\frac{s_B}{|Y|},
\]
because each edge between \(X\) and \(Y\) is included with probability
\(\frac{s_B}{|X|}\frac{s_B}{|Y|}\). Hence
\[
  e_F(X,Y)\ge \rho q_B|X||Y|.
\]
\end{proof}

The next lemma extends the lower bound for pairs of sets to a weighted
lower bound.

\begin{lemma}\label{lem:weighted-cut-density}
Let \(F\) be a graph, and let \(q,\rho>0\) and \(s\ge1\) be such that
\[
e_F(X,Y)\ge \rho q|X||Y|
\]
for all disjoint sets \(X,Y\subseteq V(F)\) with \(|X|,|Y|\ge s\).
Let \(P,Q\subseteq V(F)\) be disjoint and nonempty.
Let \((w_x)_{x\in P}\) and \((w_y)_{y\in Q}\) be nonnegative integer
weights, and set
\[
W_P=\sum_{x\in P}w_x,
\qquad W_Q=\sum_{y\in Q}w_y,
\qquad \Delta_P=\max_{x\in P}w_x,
\qquad \Delta_Q=\max_{y\in Q}w_y.
\]
If \(W_P\ge s\Delta_P\) and \(W_Q\ge s\Delta_Q\), then
\[
\sum_{xy\in E_F(P,Q)}w_xw_y
\ge
\rho q\,(W_P-s\Delta_P)(W_Q-s\Delta_Q).
\]
\end{lemma}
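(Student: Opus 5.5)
The plan is to use a layer-cake (threshold) decomposition of the weights, which turns the weighted sum into a double sum of unweighted edge counts between superlevel sets. For integers \(i\ge1\) put \(P_i=\{x\in P:w_x\ge i\}\) and \(Q_j=\{y\in Q:w_y\ge j\}\). Then \(w_x=\sum_{i=1}^{\Delta_P}\mathbf 1[x\in P_i]\), and similarly for \(w_y\), so
\[
\sum_{xy\in E_F(P,Q)}w_xw_y=\sum_{i=1}^{\Delta_P}\sum_{j=1}^{\Delta_Q}e_F(P_i,Q_j).
\]
Moreover \(\sum_{i}|P_i|=W_P\) and \(\sum_j|Q_j|=W_Q\). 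The sets \(P_i,Q_j\) are disjoint because \(P,Q\) are.

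Next I apply the hypothesis only to the pairs with \(|P_i|\ge s\) and \(|Q_j|\ge s\), and bound the remaining terms below by \(0\). Let \(I=\{i:|P_i|\ge s\}\) and \(J=\{j:|Q_j|\ge s\}\). This gives
\[
\sum_{xy\in E_F(P,Q)}w_xw_y\ge \rho q\Big(\sum_{i\in I}|P_i|\Big)\Big(\sum_{j\in J}|Q_j|\Big).
\]
It remains to show \(\sum_{i\in I}|P_i|\ge W_P-s\Delta_P\), and likewise for \(Q\). The indices \(i\le\Delta_P\) not in \(I\) number at most \(\Delta_P\), and each contributes \(|P_i|<s\). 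Hence the discarded mass is less than \(s\Delta_P\), and \(\sum_{i\in I}|P_i|\ge W_P-s\Delta_P\ge0\). The hypothesis \(W_P\ge s\Delta_P\) ensures that this lower bound is nonnegative, so the two factors may be multiplied, and both lower bounds remain valid even when \(I\) or \(J\) is empty.

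The only delicate point is bookkeeping rather than a real obstacle. Since the \(P_i\) are nested, one could sharpen the estimate by noting that \(I\) is an initial segment. That refinement is unnecessary, because the crude bound of \(\Delta_P\) discarded levels, each of size below \(s\), already yields exactly the stated loss \(s\Delta_P\). I would also check that the hypothesis requires only \(|X|,|Y|\ge s\) and disjointness, which both hold for \(P_i\) and \(Q_j\) with \(i\in I\) and \(j\in J\).
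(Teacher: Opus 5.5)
Your proposal is correct and is essentially the paper's proof. Both use the layer decomposition into superlevel sets $P_r,Q_t$ with $\sum_{xy\in E_F(P,Q)}w_xw_y=\sum_{r,t}e_F(P_r,Q_t)$ and $\sum_r|P_r|=W_P$, apply the density hypothesis only to levels of size at least $s$, and bound the discarded mass by $s\Delta_P$ (resp.\ $s\Delta_Q$). One trivial nit: the discarded mass is at most $s\Delta_P$, not strictly less, since equality holds when $\Delta_P=0$; the conclusion is unaffected.
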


\begin{proof}
For positive integers \(r,t\), define
\[
   P_r=\{x\in P:w_x\ge r\},
   \qquad
   Q_t=\{y\in Q:w_y\ge t\}.
\]
Each edge \(xy\in E_F(P,Q)\) is counted for exactly \(w_xw_y\) pairs
\((r,t)\) with \(x\in P_r\) and \(y\in Q_t\).
Summing over the edges between \(P\) and \(Q\) therefore yields the
layer-counting identity
\begin{equation}\label{eq:weighted-layer-count}
   \sum_{xy\in E_F(P,Q)}w_xw_y
   =\sum_{r\ge1}\sum_{t\ge1}e_F(P_r,Q_t).
\end{equation}
Since \(P_r=\varnothing\) for \(r>\Delta_P\) and
\(Q_t=\varnothing\) for \(t>\Delta_Q\),
\[
   \sum_{\substack{1\le r\le \Delta_P\\ |P_r|<s}}|P_r|
   \le s\Delta_P,
   \qquad
   \sum_{\substack{1\le t\le \Delta_Q\\ |Q_t|<s}}|Q_t|
   \le s\Delta_Q.
\]
Moreover, double counting the pairs \((x,r)\) with \(x\in P_r\), and
similarly the pairs \((y,t)\) with \(y\in Q_t\), we obtain
\[
   \sum_{r=1}^{\Delta_P}|P_r|=W_P,
   \qquad
   \sum_{t=1}^{\Delta_Q}|Q_t|=W_Q.
\]
Let
\[
   R=\{r:1\le r\le \Delta_P,\ |P_r|\ge s\},
   \qquad
   T=\{t:1\le t\le \Delta_Q,\ |Q_t|\ge s\}.
\]
For \(r\in R\) and \(t\in T\), the sets \(P_r\) and \(Q_t\) are disjoint
and have order at least \(s\). Hence \eqref{eq:weighted-layer-count}
and the density hypothesis imply
\[
\begin{aligned}
   \sum_{xy\in E_F(P,Q)}w_xw_y
   &\ge\sum_{r\in R}\sum_{t\in T}e_F(P_r,Q_t)\\
   &\ge\rho q\sum_{r\in R}\sum_{t\in T}|P_r||Q_t|\\
   &=\rho q
      \left(\sum_{r\in R}|P_r|\right)
      \left(\sum_{t\in T}|Q_t|\right).
\end{aligned}
\]
Using the identities for the sets \(P_r\) and the bounds for the sets
\(P_r\) with \(|P_r|<s\), we obtain
\[
   \sum_{r\in R}|P_r|
   =W_P-\sum_{\substack{1\le r\le \Delta_P\\ |P_r|<s}}|P_r|
   \ge W_P-s\Delta_P,
   \qquad
   \sum_{t\in T}|Q_t|\ge W_Q-s\Delta_Q.
\]
Substitution into the preceding inequality proves the lemma.
\end{proof}

The proof of the next probability estimate uses the following bound on
the permanent of a zero-one matrix.

\begin{lemma}[Br\`egman~\cite{Bregman}; see also Schrijver~\cite{Schrijver}]
\label{lem:bregman-minc}
Let $A=(a_{ij})$ be an $m\times m$ zero-one matrix whose $i$th row has
positive sum $r_i$. Let $S_m$ be the set of all permutations of $[m]$,
and define the permanent of $A$ by
\[
   \per(A):=\sum_{\sigma\in S_m}\prod_{i=1}^m a_{i,\sigma(i)}.
\]
Then
\[
   \per(A)\leq \prod_{i=1}^{m}(r_i!)^{1/r_i}.
\]
\end{lemma}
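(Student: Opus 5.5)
The statement just above is Brègman's theorem, which the paper cites rather than proves. I would reproduce Schrijver's short entropy-free induction, or more simply Radhakrishnan's entropy proof, which I describe here.

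\emph{Setup and reduction.} If $\per(A)=0$ there is nothing to prove, so assume $\per(A)>0$. Let $\sigma$ be chosen uniformly at random from the set $\mathcal S$ of permutations with $a_{i,\sigma(i)}=1$ for all $i$. Then $H(\sigma)=\log\per(A)$. It therefore suffices to show
\[
H(\sigma)\le \sum_{i=1}^m \frac{\log r_i!}{r_i}.
\]

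\emph{Averaging over a random revealing order.} Draw an independent uniformly random ordering $\tau$ of the rows and reveal $\sigma(i)$ row by row in the order $\tau$. The chain rule gives
\[
H(\sigma)=\sum_i H\bigl(\sigma(i)\mid \sigma(j):\ j \text{ before } i \text{ in } \tau\bigr),
\]
where the right-hand side is averaged over $\tau$ (conditional entropy given $\tau$ as well). Fix $\sigma$ and the row $i$. Let $N_i$ be the number of columns $k$ with $a_{ik}=1$ that are not already used by the rows preceding $i$. The value $\sigma(i)$ lies among these $N_i$ columns, so the conditional entropy term for row $i$ is at most $\mathbb E\log N_i$.

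\emph{The key computation.} For a fixed $\sigma\in\mathcal S$, the $r_i$ columns available in row $i$ are occupied by $r_i$ distinct rows, one of which is $i$ itself. Under a uniform $\tau$, the position of $i$ among these $r_i$ rows is uniform. Hence $N_i$ is uniform on $\{1,\dots,r_i\}$, and
\[
\mathbb E\log N_i=\frac{1}{r_i}\sum_{k=1}^{r_i}\log k=\frac{\log r_i!}{r_i}.
\]
Summing over $i$ gives the desired bound on $H(\sigma)$.

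\emph{Main obstacle.} The delicate step is justifying conditioning on $\tau$. Write $H(\sigma)=H(\sigma\mid\tau)$, using independence of $\sigma$ and $\tau$. Then apply the chain rule for each fixed $\tau$, and bound each term by $\mathbb E[\log(\text{support size})]$ via the conditional form $H(X\mid Y)\le \mathbb E\log|{\rm supp}(X\mid Y)|$. Once this is set up, averaging over $\tau$ is exactly the uniform-position computation above. Exponentiating then yields $\per(A)\le\prod_i (r_i!)^{1/r_i}$.
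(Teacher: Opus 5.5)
Your proof is correct. It is Radhakrishnan's entropy argument, and you handle the delicate points properly:
\begin{itemize}
\item Since $\tau$ is independent of $\sigma$, you may fix $\tau$, apply the chain rule, and then average over $\tau$.
\item $N_i$ is a function of the values already revealed, so each term is at most $\mathbb{E}\log N_i$.
\item For fixed $\sigma$, $N_i$ equals one plus the number of the other $r_i-1$ rows $\sigma^{-1}(k)$ with $a_{ik}=1$ that $\tau$ places after $i$. Hence $N_i$ is uniform on $\{1,\dots,r_i\}$.
\end{itemize}

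The paper gives no proof of this lemma. It cites Br\`egman and Schrijver and uses the inequality only as a black box in Lemma~\ref{lem:avoid}.

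Schrijver's proof, which the citation points to, takes a different route. It inducts on $m$ and expands $\per(A)=\sum_{k:\,a_{ik}=1}\per(A_{ik})$ along every row. It then applies the convexity inequality $\bigl(\tfrac1r\sum_k t_k\bigr)^{\sum_k t_k}\le\prod_k t_k^{t_k}$ and regroups the resulting product as a product over the permutations counted by $\per(A)$. It closes with a double count: for each such $\sigma$ and row $j$, exactly $r_j-1$ of the minors $A_{i\sigma(i)}$ with $i\ne j$ lower row $j$ to sum $r_j-1$, while the remaining $m-r_j$ leave it at $r_j$. Your uniformly random revealing order is the probabilistic counterpart of this bookkeeping; both rest on the fact that $r_i-1$ of the relevant columns are used by rows other than $i$.

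The entropy route is shorter, avoids both the induction and the convexity lemma, and adapts well to related counting bounds, for instance entropy proofs of Kahn--Lov\'asz-type bounds for perfect matchings in general graphs. Schrijver's argument is fully elementary and needs no information theory. Either suffices here, since the paper only uses the inequality itself.
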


The following lemma applies Lemma~\ref{lem:bregman-minc} to permutations
that avoid a prescribed set of cells. In the proof of
Theorem~\ref{thm:main}, the forbidden cells correspond to matching edges
whose images would have color~1 and hence cannot occur in a color~2
ordered embedding.

\begin{lemma}\label{lem:avoid}
Let $H\subseteq[m]\times[m]$ be a set of forbidden cells, and let
$Z=|H|$. For a uniformly random permutation $\pi$ of $[m]$,
\[
   \bbP\bigl((i,\pi(i))\notin H \text{ for all } i\in[m]\bigr)
      \le \exp\left(-\frac{Z}{2m}\right).
\]
\end{lemma}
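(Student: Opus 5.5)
The plan is to write the probability as a ratio of a permanent to $m!$ and bound the permanent with Lemma~\ref{lem:bregman-minc}. Let $A$ be the $m\times m$ zero-one matrix with $a_{ij}=1$ exactly when $(i,j)\notin H$. Then $\per(A)$ counts the permutations avoiding $H$, so the desired probability is $\per(A)/m!$. Let $z_i$ be the number of forbidden cells in row $i$, so that $\sum_i z_i=Z$, and put $r_i=m-z_i$. If some $r_i=0$, then $\per(A)=0$ and there is nothing to prove. Otherwise Lemma~\ref{lem:bregman-minc} gives
\[
\frac{\per(A)}{m!}\le \prod_{i=1}^m \frac{(r_i!)^{1/r_i}}{(m!)^{1/m}}.
\]

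The key step is a one-variable estimate: for $1\le r\le m$,
\[
\frac{(r!)^{1/r}}{(m!)^{1/m}}\le \exp\left(-\frac{m-r}{2m}\right).
\]
Granting this, multiplying over the rows gives the bound $\exp(-\sum_i z_i/(2m))=\exp(-Z/(2m))$.

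To prove the estimate, set $f(r)=\frac1r\log_e r!$. It suffices to show that consecutive increments satisfy $f(k+1)-f(k)\ge \frac{1}{2m}$ for $1\le k<m$, and then telescope from $r$ up to $m$. A direct computation gives
\[
f(k+1)-f(k)=\frac{1}{k+1}\left(\log_e(k+1)-f(k)\right)=\frac{1}{k(k+1)}\sum_{j=1}^{k}\log_e\frac{k+1}{j}.
\]
Using $\log_e x\ge 1-1/x$, the sum is at least
\[
\sum_{j=1}^k\left(1-\frac{j}{k+1}\right)=\frac{k}{2}.
\]
So the increment is at least $\frac{1}{2(k+1)}\ge \frac{1}{2m}$, since $k+1\le m$. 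Telescoping then gives $f(m)-f(r)\ge \frac{m-r}{2m}$, which is exactly the claimed inequality.

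I expect the main obstacle to be this increment estimate; the rest of the argument is bookkeeping. The inequality $\log x\ge 1-1/x$ closes it cleanly. One small point to watch: the paper takes $\log$ to base 2, but the estimate is done with natural logarithms, which is what the exponential in the statement requires.
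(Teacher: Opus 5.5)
Your proof is correct and follows the paper's route: you write the probability as $\per(A)/m!$, apply Br\`egman's inequality, and bound each row factor $(r_i!)^{1/r_i}/(m!)^{1/m}$. The only difference is cosmetic: the paper proves the slightly stronger bound $(r!)^{1/r}\le (m!)^{1/m}(r/m)^{1/2}$ via monotonicity of $(t!)^{1/t}/\sqrt{t}$ and then applies $1-x\le e^{-x}$, whereas your telescoping of $\frac1r\log_e r!$ with $\log_e x\ge 1-1/x$ gives the exponential form directly.
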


\begin{proof}
For each $i\in[m]$, let
\[
   d_i=|\{j\in[m]:(i,j)\in H\}|,
   \qquad
   p_i=m-d_i.
\]
If $p_i=0$ for some $i$, then $(i,\pi(i))\in H$ for every permutation
$\pi$ of $[m]$. In this case the probability in the statement is zero, so
the desired bound holds. We may therefore assume that $p_i\geq1$ for every
$i\in[m]$.

Let $A=(A_{ij})_{i,j\in[m]}$ be the zero-one matrix defined by
\[
   A_{ij}=
   \begin{cases}
      1,&\text{if }(i,j)\notin H,\\
      0,&\text{if }(i,j)\in H.
   \end{cases}
\]
The $i$th row of $A$ has sum $p_i$, and a permutation $\sigma$ of $[m]$
avoids $H$ precisely when $A_{i,\sigma(i)}=1$ for every $i\in[m]$.
Consequently, $\per(A)$ is the number of permutations that avoid $H$.
Since $\pi$ is uniformly random,
\[
   \bbP\bigl((i,\pi(i))\notin H\text{ for every }i\in[m]\bigr)
   =\frac{\per(A)}{m!}.
\]

We use the elementary inequality
\[
   (p!)^{1/p}
   \leq (m!)^{1/m}\left(\frac{p}{m}\right)^{1/2}
\]
for $1\leq p\leq m$.
This follows from the fact that
$f(t)=(t!)^{1/t}/\sqrt{t}$ is nondecreasing in the positive
integer variable $t$.
Indeed, we have
\[
\left(\frac{f(t)}{f(t+1)}\right)^{t(t+1)}
 =
 \prod_{j=0}^{t-1}
 \left(1-\frac jt\right)\left(1+\frac1t\right)^j\leq1,
\]
where the inequality follows from
$1-j/t\leq (1+1/t)^{-j}$ for $0\leq j\leq t-1$.

Apply Lemma~\ref{lem:bregman-minc} to $A$ and then apply this estimate to
each row sum $p_i$. Since $p_i/m=1-d_i/m$ and $1-x\leq e^{-x}$ for
$x\geq0$, we obtain
\[
\begin{aligned}
   \bbP\bigl((i,\pi(i))\notin H\text{ for every }i\in[m]\bigr)
   &=\frac{\per(A)}{m!}
   \leq\frac1{m!}\prod_{i=1}^{m}(p_i!)^{1/p_i}
   \leq\prod_{i=1}^{m}\left(\frac{p_i}{m}\right)^{1/2}\\
   &=\prod_{i=1}^{m}\left(1-\frac{d_i}{m}\right)^{1/2}
   \leq\exp\left(-\frac{1}{2m}\sum_{i=1}^{m}d_i\right)\\
   &=\exp\left(-\frac{Z}{2m}\right),
\end{aligned}
\]
where the final equality uses $\sum_{i=1}^{m}d_i=|H|=Z$.
This proves the lemma.
\end{proof}

\section{Proof of Theorem~\ref{thm:main}}\label{sec:main-proof}

Let $\pi$ be a uniformly random permutation of $[m]$
and define the permutation matching
\[
   M_\pi=\{\{x,m+\pi(x)\}:1\le x\le m\}
\]
on the ordered vertex set $[2m]$. This matching has
interval chromatic number two.
Fix a sufficiently small constant $\delta>0$, to be chosen below, and set
\[
 B:=\delta m^{2/3}(\log m)^{1/3}.
\]
Apply Corollary~\ref{cor:GW} to obtain a triangle-free
graph $F$ on $[B]$.
Write
\[
   q_B=\sqrt{\frac{\log B}{B}},
   \qquad
   s_B=C\sqrt{B\log B},
   \qquad
   L:=\frac{m}{32s_B},
   \qquad
   N:=BL.
\]

Partition the ordered vertex set $[N]$ into
consecutive intervals
\[
   V_1<V_2<\cdots<V_B,
   \qquad |V_s|=L.
\]
Color the edges of the ordered complete graph on
$[N]$ as follows. If
$x\in V_s$, $y\in V_t$, $s<t$, and $st\in E(F)$, then $xy$ receives color $1$.
All remaining edges receive
color $2$. The color-$1$ graph is a blow-up of the triangle-free graph $F$,
and hence contains no triangle.

Let $G_2$ denote the spanning subgraph of the ordered complete graph on
$[N]$ whose edge set consists of all edges of color $2$.
It remains to show that, with high probability, \(G_2\) contains no ordered
copy of \(M_\pi\).

Let $S_m$ denote the set of all permutations of $[m]$, and let
$\bbP_\pi$ denote probability with respect to a uniformly random choice
of $\pi\in S_m$.
The following proposition bounds the probability that $G_2$ contains
the random matching $M_\pi$.

\begin{proposition}\label{prop:randommatching}
There exists an absolute constant $\delta_0>0$ such that, for every fixed
$\delta\in(0,\delta_0]$,
\[
\bbP_{\pi}\bigl(G_2\text{ contains an ordered copy of }M_\pi\bigr)
=o(1)
\]
as $m\to\infty$,
where $\pi$ is chosen uniformly at random from $S_m$.
\end{proposition}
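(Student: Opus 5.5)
Throughout, $\varphi\colon[2m]\to[N]$ is a potential order-preserving embedding of $M_\pi$ into $G_2$. The plan is a first-moment argument over a small family of \emph{coarse profiles} rather than over all embeddings. Each profile forces a large set $H$ of forbidden cells in $[m]\times[m]$ that $\pi$ must avoid. Lemma~\ref{lem:avoid} bounds the probability of avoiding $H$ by $\exp(-|H|/2m)$. The weighted density bound of Lemma~\ref{lem:weighted-cut-density} gives $|H|=\Omega(q_Bm^2)$. A union bound over $\exp(O(B))$ profiles then finishes the proof, since $q_Bm/B$ grows like $\delta^{-3/2}$.

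\textbf{Step 1 (profiles).} Set $K:=64B$ and cut the left side $[m]$ into consecutive intervals $I_1<\dots<I_K$ of length $\ell=m/K$. Cut the right side $\{m+1,\dots,2m\}$ likewise into $J_1<\dots<J_K$.
\begin{itemize}
\item Since $\varphi$ is increasing, the block index $\beta(x)$ (defined by $\varphi(x)\in V_{\beta(x)}$) is nondecreasing in $x$.
\item Call an interval $I_k$ \emph{pure} if $\varphi(I_k)$ lies in a single block $V_s$. Fewer than $B$ intervals per side are impure, because each impure interval contains a block change. Hence the impure intervals cover at most $B\ell=m/64$ vertices on each side.
\item The profile of $\varphi$ records, for each $I_k$ and each $J_k$, either ``impure'' or the block it maps into. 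Both sequences are monotone, so a standard stars-and-bars count gives at most $2^{O(K+B)}=2^{O(B)}$ profiles, with the implied constant independent of $\delta$.
\item All left images precede all right images. So at most one block $V_{s^*}$ receives both left and right vertices, and I discard the pure intervals mapped into $V_{s^*}$. These cover at most $2L$ vertices in total, and $2L\le m/16$ once $s_B\ge1$.
\end{itemize}

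\textbf{Step 2 (forbidden cells).} For a fixed profile, let $P$ be the set of blocks receiving retained pure left intervals and $Q$ the corresponding set for the right side. These are disjoint, and every block of $P$ precedes every block of $Q$. Suppose $x\in I_k$ with $I_k$ pure in $V_s$, and $m+y\in J_{k'}$ with $J_{k'}$ pure in $V_t$, where $st\in E(F)$. Then $\varphi(x)\varphi(m+y)$ has colour $1$, so $\pi(x)\neq y$. Consequently every cell of $I_k\times(J_{k'}-m)$ belongs to a forbidden set $H$ that depends only on the profile.

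Give block $s\in P$ weight $w_s$ equal to the number of retained left vertices it receives, and define the weights on $Q$ analogously. Then:
\begin{itemize}
\item $w_s\le L$, so $\Delta_P,\Delta_Q\le L$;
\item $W_P,W_Q\ge m-m/64-2L$;
\item $|H|=\sum_{st\in E_F(P,Q)}w_sw_t$.
\end{itemize}
Corollary~\ref{cor:GW} and Lemma~\ref{lem:weighted-cut-density} then give
\[
|H|\ge\rho q_B\,(W_P-s_BL)(W_Q-s_BL)\ge \rho q_B(m/2)^2,
\]
because $s_BL=m/32$. (If $W_P<s_BL$, the profile is impossible anyway by the lower bound on $W_P$.)

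\textbf{Step 3 (union bound).} On the event that $\varphi$ embeds $M_\pi$ into $G_2$, the permutation $\pi$ avoids the forbidden set $H$ of its profile. By Lemma~\ref{lem:avoid} and the union bound,
\[
\bbP_\pi\bigl(G_2\text{ contains }M_\pi\bigr)\le 2^{O(B)}\exp\bigl(-\rho q_Bm/8\bigr).
\]
Now
\[
q_Bm=m\sqrt{\log B/B}=\Theta\!\bigl(\delta^{-1/2}m^{2/3}(\log m)^{1/3}\bigr)=\Theta(\delta^{-3/2}B),
\]
so for $\delta\le\delta_0$ small enough the exponent $\rho q_Bm/8$ exceeds twice the $O(B)$ term. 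The probability is therefore $e^{-\Omega(B)}=o(1)$.

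The main obstacle is Step 1. The naive encoding by the full monotone map $[m]\to[B]$ gives about $\exp(B\log(m/B))$ profiles, which is a $\log$ factor too many to beat $q_Bm$. I would have to check carefully that rounding to $O(B)$ intervals loses only a constant fraction of the vertices, including the shared block $V_{s^*}$ and the requirement $W\ge s_B\Delta$, while keeping the profile count at $2^{O(B)}$ with a constant independent of $\delta$.
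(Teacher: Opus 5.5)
Your proposal is correct and follows essentially the same route as the paper. You round each side to $O(B)$ small intervals, discard the impure ones and the single block shared by both sides, bound $|H|$ from below by Corollary~\ref{cor:GW} and Lemma~\ref{lem:weighted-cut-density} using $s_BL=m/32$, and conclude with Lemma~\ref{lem:avoid} and a union bound over $e^{O(B)}$ profiles. The differences are cosmetic: you take $K=64B$ and label each small interval by its block, while the paper takes $K=1600B$ and records the unions $I_s^\circ$, $J_t^\circ$ together with $\tau$, always discarding block $\tau$. One small point to tidy up is that the discarded shared block should be read off from the profile itself (e.g.\ as the unique label occurring on both sides), so that $H$ is genuinely a function of the profile.
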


\begin{proof}
Throughout this proof, probabilities are taken with respect to the uniform
random permutation \(\pi\).
Fix $\eps=1/16$ for convenience; no effort is made to optimize
the numerical constants. Since $B=o(m)$, we may assume that $B\leq \eps m/100$.
Suppose that $\phi$ is an order-preserving injection
from $V(M_\pi)$ to $[N]$.
For each $s\in[B]$, define
\[
   I_s=\{x\in[m]:\phi(x)\in V_s\},
   \qquad
   J_s=\{y\in[m]:\phi(m+y)\in V_s\}.
\]
Let $\tau=\max\{s:I_s\ne\emptyset\}$.
Since $\phi$ is order-preserving and $V_1<\cdots<V_B$ are consecutive
intervals, the sets $I_s$ and $J_s$ are possibly empty intervals of size at
most $L$, with $I_s=\emptyset$ for $s>\tau$ and $J_s=\emptyset$ for
$s<\tau$.
Hence, allowing empty intervals, we have
\[
   [m]=I_1\cup\cdots\cup I_\tau,
   \qquad
   [m]=J_\tau\cup\cdots\cup J_B.
\]
In particular, $V_\tau$ is the only block that may contain images from both
sides.

Let $K:=100B/\varepsilon$, and partition $[m]$ into $K$ consecutive
intervals, each of size at most $m/K+1$; call them small intervals.
The next claim provides large subsets of the $I_s$ with $s<\tau$ and
the $J_s$ with $s>\tau$, and bounds the number of possible choices.

\begin{claim}\label{clm:profiles}
For every $\phi$, there are sets $I_s^\circ\subseteq I_s$ for $s<\tau$
and $J_s^\circ\subseteq J_s$ for $s>\tau$, each formed by consecutive
small intervals, such that
\[
\sum_{s<\tau}|I_s^\circ|\ge(1-\varepsilon)m-L
\qquad\text{and}\qquad
\sum_{s>\tau}|J_s^\circ|\ge(1-\varepsilon)m-L.
\]
Let
\[
\mathcal P
=\bigl(I_1^\circ,\ldots,I_{\tau-1}^\circ;
       J_{\tau+1}^\circ,\ldots,J_B^\circ\bigr).
\]
There are at most $\exp(32B)$ possible pairs $(\mathcal P,\tau)$.
\end{claim}

\begin{proof}
For each $s<\tau$, let $I_s^\circ$ be the union of all small intervals
contained in $I_s$. For each $s>\tau$, let $J_s^\circ$ be the union of
all small intervals contained in $J_s$.

Since the nonempty sets $I_s$ form an interval partition of $[m]$
with at most $B$ parts, at most $B-1$ small intervals are not
contained in a single $I_s$.
Therefore, at most
\[
(B-1)\left(\frac{m}{K}+1\right)
\le B\left(\frac{m}{K}+1\right)
=\frac{\varepsilon m}{100}+B
\le\frac{\varepsilon m}{50}
\]
vertices outside $I_\tau$ fail to lie in any $I_s^\circ$ with $s<\tau$.
The same argument applied to $J_\tau,\dots,J_B$ gives the corresponding
bound for the sets $J_s^\circ$ with $s>\tau$.
Since $|I_\tau|,|J_\tau|\le L$, we obtain
\[
\sum_{s<\tau}|I_s^\circ|\ge(1-\varepsilon)m-L
\qquad\text{and}\qquad
\sum_{s>\tau}|J_s^\circ|\ge(1-\varepsilon)m-L.
\]

We next bound the number of possible pairs $(\mathcal P,\tau)$.
Fix $\tau$ and consider the sets $I_s^\circ$.
The $K$ small intervals can be split into groups according to the
$\tau-1$ sets $I_s^\circ$ and the $\tau$ gaps before, between, and after
these sets. The sizes of these $2\tau-1$ groups are nonnegative integers
that sum to $K$ and determine the sets $I_s^\circ$.
We append zeros to obtain a sequence of $2B+1$ nonnegative integers with
the same sum. Hence the number of possible choices is at most
\[
\binom{K+(2B+1)-1}{(2B+1)-1}=\binom{K+2B}{2B}.
\]
The same bound holds for the sets $J_s^\circ$.
Since $K=100B/\eps$ and $\eps=1/16$, the number of pairs
$(\mathcal P,\tau)$ is at most
\[
    B\binom{K+2B}{2B}^{\!2}
      \le B\left(\frac{e(K+2B)}{2B}\right)^{4B}
      =B(801e)^{4B}
      \le \exp(32B)
\]
for all sufficiently large $B$.
\end{proof}

Let $(\mathcal P,\tau)$ range over all pairs obtained from
order-preserving injections $\phi\colon[2m]\to[N]$ by the construction
in the proof of Claim~\ref{clm:profiles}.
This collection does not depend on $\pi$ and contains at most
$\exp(32B)$ pairs.

For each such pair, let $\mathcal E(\mathcal P,\tau)$ be the event that
some injection giving this pair is an ordered embedding of $M_\pi$
into $G_2$. Let $\mathcal B$ be the event that $G_2$ contains an ordered
copy of $M_\pi$. Then $\mathcal B$ occurs if and only if
$\mathcal E(\mathcal P,\tau)$ occurs for at least one of these pairs.
Hence
\[
\mathcal B
=\bigcup_{(\mathcal P,\tau)}\mathcal E(\mathcal P,\tau).
\]
Fix one such pair.
To bound $\bbP(\mathcal E(\mathcal P,\tau))$, define the forbidden board
\[
   H(\mathcal P,\tau)=
      \bigcup_{\substack{s<\tau<t\\ \{s,t\}\in E(F)}}
      \bigl(I_s^\circ\times J_t^\circ\bigr)
      \subseteq [m]\times[m].
\]
Suppose that $\mathcal E(\mathcal P,\tau)$ occurs, and let $\phi$
be an ordered embedding witnessing this event.
If $(x,\pi(x))\in H(\mathcal P,\tau)$ for some $x\in[m]$, then
there exist $s<\tau<t$ with $\{s,t\}\in E(F)$ such that
$x\in I_s^\circ$ and $\pi(x)\in J_t^\circ$.
Thus $\phi(x)\in V_s$ and $\phi(m+\pi(x))\in V_t$, so the
matching edge $\{x,m+\pi(x)\}$ is mapped to an edge of color~1,
a contradiction. Hence $\pi$ avoids $H(\mathcal P,\tau)$, and
\begin{equation}\label{eq-1}
   \bbP(\mathcal E(\mathcal P,\tau))
   \le
   \bbP\bigl((i,\pi(i))\notin H(\mathcal P,\tau)
             \text{ for all }i\in[m]\bigr).
\end{equation}

We next estimate the size of $H(\mathcal P,\tau)$.
For $s<\tau$ put $w_s=|I_s^\circ|$, and for $t>\tau$ put
$w_t=|J_t^\circ|$. Set
\[
   W_P=\sum_{s<\tau}w_s,
   \qquad
   W_Q=\sum_{t>\tau}w_t,
   \qquad
   \Delta_P=\max_{s<\tau}w_s,
   \qquad
   \Delta_Q=\max_{t>\tau}w_t.
\]
Then $0\le w_s,w_t\le L$, so $\Delta_P,\Delta_Q\le L$, and, for all
sufficiently large $m$,
\[
   W_P,W_Q
      \ge (1-\eps)m-L
      \ge \frac34 m
      \ge 8s_B L.
\]
Here we used $\eps=1/16$ and $L=m/(32s_B)$. In particular,
$W_P\ge s_B\Delta_P$ and $W_Q\ge s_B\Delta_Q$.
Let
\[
   P=\{s<\tau:w_s>0\},
   \qquad
   Q=\{t>\tau:w_t>0\}.
\]
Then $P$ and $Q$ are disjoint and nonempty, and the weights
$(w_s)_{s\in P}$ and $(w_t)_{t\in Q}$ are at most $L$.
By Corollary~\ref{cor:GW}, the density condition in
Lemma~\ref{lem:weighted-cut-density} holds with $q=q_B$ and $s=s_B$.
Since the rectangles $I_s^\circ\times J_t^\circ$ are pairwise
disjoint, Lemma~\ref{lem:weighted-cut-density} yields
\[
\begin{aligned}
   |H(\mathcal P,\tau)|
   &=\sum_{\substack{s<\tau<t\\ \{s,t\}\in E(F)}}w_sw_t\\
   &\geq \rho q_B(W_P-s_B\Delta_P)(W_Q-s_B\Delta_Q)\\
   &\geq \rho q_B(W_P-s_BL)(W_Q-s_BL)
    \geq \frac{49\rho}{64}q_BW_PW_Q
     \geq \frac{\rho}{2}q_BW_PW_Q.
\end{aligned}
\]
For each pair $(\mathcal P,\tau)$, this bound and
$W_P,W_Q\ge 3m/4$ imply
$|H(\mathcal P,\tau)|\ge 9\rho q_Bm^2/32$.
Therefore, by \eqref{eq-1}, Lemma~\ref{lem:avoid},
and the union bound,
\[
\begin{aligned}
   \bbP(\mathcal B)
   &\le \sum_{(\mathcal P,\tau)}
      \bbP(\mathcal E(\mathcal P,\tau))
      \le \sum_{(\mathcal P,\tau)}
      \exp\left(-\frac{|H(\mathcal P,\tau)|}{2m}\right)
   \le \exp\left(32B-\frac{9\rho}{64}q_Bm\right),
\end{aligned}
\]
where the last inequality uses Claim~\ref{clm:profiles}
and the preceding bound on $|H(\mathcal P,\tau)|$.
Finally, we have
\[
B=O\!\left(\delta m^{2/3}(\log m)^{1/3}\right)
\quad\text{and}\quad
q_Bm=\Omega\!\left(\delta^{-1/2}m^{2/3}(\log m)^{1/3}\right),
\]
where the constants in the $O$ and $\Omega$ notation are absolute.
Take $\delta>0$ sufficiently small.
Then there is an absolute constant $c_1>0$ such that
\[
   32B-\frac{9\rho}{64}q_Bm
   \leq -c_1m^{2/3}(\log m)^{1/3}
\]
for all sufficiently large $m$. Hence $\bbP(\mathcal B)=o(1)$,
which proves the proposition.
\end{proof}

Choose $\delta>0$ sufficiently small.
Proposition~\ref{prop:randommatching} and the fact that the graph of
color~1 is triangle-free together imply that, asymptotically almost surely,
\[
\rt(K_3,M_\pi)>N=BL\ge c'\frac{m^{4/3}}{(\log m)^{1/3}}
\ge c\frac{n^{4/3}}{(\log n)^{1/3}},
\]
where \(n=2m\) and $c',c>0$ are absolute constants.
This proves Theorem~\ref{thm:main}.

\bigskip\noindent
\textbf{Concluding remarks.}
We have improved the lower bound for ordered Ramsey numbers of random
matchings with interval chromatic number two versus triangles by combining
pseudorandom triangle-free graphs with a coarse encoding of embeddings.
The gap between our lower bound
$\Omega(n^{4/3}/(\log n)^{1/3})$ and the upper bound $O(n^{7/4})$
of Balko and Poljak~\cite{BalkoPoljak} remains open.
A natural next step is to further narrow the gap between the lower and upper bounds.

\bigskip\noindent
\textbf{Declaration on the use of AI.}
The authors used generative AI tools to support discussions of possible
approaches and to improve exposition. All mathematical arguments, results,
and conclusions were verified by the authors, who take full responsibility
for them.

\end{document}